\theoremstyle{plain}
\newtheorem{theorem}{Theorem}[section]
\theoremstyle{definition}
\newtheorem{definition}[theorem]{Definition}
\newtheorem{example}[theorem]{Example}
\theoremstyle{remark}
\newtheorem{remark}[theorem]{Remark}
\title{Constructive proof of extended Kapranov theorem}
\author{Luis Felipe Tabera\footnote{Supported by the European Research Training Network RAAG (HPRN-CT-2001-00271), a FPU research grant and the project MTM2005-08690-C02-02 from the Spanish Ministerio de Educaci\'on y Ciencia.}\phantom{x}\footnote{First publiched in the Proceedings of the ``X Encuentro en \'Algebra Computacional y Aplicaciones", EACA 2006, Sevilla, (F. J. Castro Jimenez, J. M. Ucha Enriquez editors), ISBN: 84-611-2311-5, pp 178-181.}}
\date{\today}
\begin{document}
\maketitle

\begin{abstract}
Kapranov Theorem is a well known generalization of Newton-Puiseux theorem for the case of several variables.
This theorem is stated mainly in the context of tropical geometry.
We present a new, constructive proof, that also characterizes the possible principal terms of points in a hypersurface contained in the algebraic torus $(\mathbb{K}^*)^n$.
\end{abstract}

\section*{Introduction}

Newton-Puiseux method for computing Puiseux series \cite{wal50} is a fundamental tool in the study of singularities of algebraic curves.
It proves that the field $\mathbb{K}$ of Puiseux series is an algebraically closed field.
But, in addition, it gives the possible order of the roots of a polynomial using the so called Newton polygon method.
More precisely, let us write every element $\widetilde{a}\in\mathbb{K}$ as a fractional power series over $\mathbb{C}$,
\[\widetilde{a}=\sum_{i= k}^{\infty}\alpha_it^{\frac{i}{q}},\ k\in\mathbb{Z},\ q \in \mathbb{N}^*,\ \alpha_i \in \mathbb{C}.\]
The order of a non zero series is $o(\widetilde{a})=\min\{i\ |\ \alpha_i\neq 0\} \in \mathbb{Q}$, $o(0)=\infty$.
This order is a valuation on $\mathbb{K}$, with valuation group $\mathbb{Q}$ and residue field $\mathbb{C}$.
The principal coefficient of an element $\widetilde{a}\in\mathbb{K}$ is $pc(\widetilde{a})=\alpha_{o(\widetilde{a})}\in \mathbb{C}$.
An element will usually be written as $\widetilde{a}=\alpha_{o(\widetilde{a})} t^{o(\widetilde{a})} + O(t^{ o( \widetilde{a}) +\epsilon})$ to emphasize the principal term of a series.
In the Newton-Puiseux process, it is shown that the orders of the (non-zero) roots of an univariate polynomial $\widetilde{f}(x)=\sum_{i=0}^m \widetilde{a}_ix^i$ are precisely the numbers $r\in\mathbb{Q}$ such that the value of $f(r)=\min_i\{o(\widetilde{a}_i )+ ir\}$ is attained for at least two different indices $i$.

This is the aspect generalized by Kapranov theorem when the polynomial is multivariate.
Given $\widetilde{f}=\sum_{i\in I} \widetilde{a}_ix^i\in\mathbb{K}[x_1,\ldots,x_n]$, $x^i=x_1^{i_1}\ldots x_n^{i_n}$, consider the piecewise affine map $f(x)=\min_{i\in I}\{o(\widetilde{a}_i) +ix\}$, where $ix$ denotes the standard scalar product $i_1x_1+\ldots +i_nx_n$.
Consider $V_{\widetilde{f}}$ the ``non zero'' roots of $\widetilde{f}$.
That is, the points in the variety defined by $\widetilde{f}$ such that no coordinate is zero, $V_{\widetilde{f}}\subseteq (\mathbb{K}^*)^n$.
Kapranov theorem (\emph{cf.} \cite{ekl00}) proves that the image by the order map applied componentwise $O:(\mathbb{K}^*)^n\longrightarrow \mathbb{Q}^n$ of $V_{\widetilde{f}}$  is the set of points
\[O(V_{\widetilde{f}})=\{r=(r_1,\ldots,r_n)\ |\ o(\widetilde{a}_i)+ir=o(\widetilde{a}_j)+jr\leq o(\widetilde{a}_k)+kr,\ k\neq i,j\}=\mathcal{T}(f).\]
That is, the points $(r_1,\ldots,r_n)$ such that the value $f(r_1,\ldots,r_n)$ is attained for at least two different indices $i$.
This set is denoted by $\mathcal{T}(f)$.

This theorem is essential in the context of tropical geometry.
Tropical geometry treats the sets $O(V_{\widetilde{f}})\subseteq\mathbb{Q}^n$ as geometric entities.
In many cases, these geometric entities (tropical varieties) are manipulated in order to derive useful information for the algebraic varieties $V_{\widetilde{f}}$.
Kapranov theorem means that, at least for the case of hypersurfaces, tropical varieties $O(V_{\widetilde{f}})$ can be defined algebraically via the piecewise affine functions $f(x_1,\ldots,x_n)$.
These functions are polynomials in the context of tropical semirings.

There are already several constructive proofs of Kapranov theorem, \cite{ekl00}, \cite{shu05}.
But they do not describe what the principal coefficients of the roots are.
This information about the principal coefficients is required in the study of geometric constructions presented in \cite{tab05}, \cite{tab06}.
So, it is needed an extended version of this theorem that also considers the principal coefficients and not just the orders.
Such a version appears in \cite{sp05}, not only for the case of hypersurfaces discussed here, but for general varieties.
The problem with this version is that it is not constructive, and we are looking (\emph{cf.} \cite{tab05}, \cite{tab06}) for effective ways of computing a root in $V_{\widetilde{f}}$ from its principal terms.
That is why we built up a new proof of the theorem inspired in the classical univariate one.

A last remark for the reader with knowledge of tropical geometry.
Tropical geometry usually deals with a characteristic zero algebraically closed valued field with characteristic zero residue field.
However, the presentation here is done exclusively over the field of Puiseux series, as it is easier to work with it.
Anyway, as the theory of this class of fields is complete \cite{rob56}, it is easy to check that this theorem can be rewritten in the first order language of the theory, and hence, it remains true in the whole class of fields.
This model theoretic aspect will not be developed here, though.

\section{Computing the preimage of a point in a hypersurface}

\begin{definition}
Let $\widetilde{f}=\sum_{i\in I} \widetilde{a}_ix^i \in\mathbb{K}[x]$ be a polynomial, $x=x_1,\ldots,x_n$, $i=i_1,\ldots,i_n$, $pc(\widetilde{a}_i)= \alpha_i$, $o(\widetilde{a}_i)=a_i$, $f(x)=\min_{i \in I}\{ a_i +ix\}$.
Let $b=(b_1,\ldots,b_n)\in \mathbb{Q}^n$.
Define \[\widetilde{f}_{b}(x_1,\ldots,x_n)=\sum_{\substack{i\in I\\ a_i+i_1 b_1\cdots + i_n b_n= f(b_1,\ldots,b_n)} } \alpha_ix^i=pc(\widetilde{f}(x_1t^{b_1},\ldots, x_n t^{b_n})),\] a polynomial over the residue field $\mathbb{C}$.
That is, rewrite the polynomial $\widetilde{f}(x_1t^{b_1},\ldots,x_nt^{b_n})$ as a series in $\mathbb{C}[x_1,\ldots,x_n]((t^{\frac{1}{q}}))$. For some $\epsilon>0$
\[\widetilde{f}(xt^{b})=\widetilde{f}_b(x)t^{f(b)}+O(t^{f(b)+\epsilon}).\]
\end{definition}

\begin{remark}\label{sobra}
Note that, by construction, the monomials of $\widetilde{f}_b$ correspond with the indices $i$ where $f(b)$ is attained.
It is immediate that the following sentences are equivalent:
\begin{itemize}
\item $b\in\mathcal{T}(f)$ ($b\in\mathbb{Q}^n$ by hypothesis).
\item $\widetilde{f}_b$ has at least two monomials.
\item $\widetilde{f}_b$ has a root in $(\mathbb{C}^*)^n.$
\end{itemize}
\end{remark}

\begin{theorem}[Newton-Puiseux lifting method in several variables, constructive version of extended Kapranov theorem]\label{initialserie}
Let $\widetilde{f}$ $=$ $\sum_{i\in I}\widetilde{a}_ix^i$ $\in \mathbb{K}[x_1,\ldots,x_n]$ be a polynomial.
Then, given $(\gamma_1, \ldots,\gamma_n)\in (\mathbb{C}^*)^n$, $(b_1,\ldots,b_n)\in \mathbb{Q}^n$, there is a point $\widetilde{b}=(\widetilde{b}_1,\ldots,\widetilde{b}_n)\in V(\widetilde{f})$ with $pc(\widetilde{b}_j)=\gamma_j, o(\widetilde{b}_j)=b_j$ if and only if $(b_1,\ldots,b_n)\in \mathcal{T}(f)$ and $\widetilde{f}_{b}(\gamma_1, \ldots,\gamma_n)=0$.
\end{theorem}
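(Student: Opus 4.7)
The plan is to handle the two implications separately: the necessary direction by direct substitution, the sufficient direction by reducing to the univariate Newton-Puiseux theorem through a monomial change of variables.

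\emph{Necessary direction.} I would write $\widetilde{b}_j=\gamma_j t^{b_j}(1+u_j)$ with $o(u_j)>0$ and, using the identity $\widetilde{f}(xt^b)=\widetilde{f}_b(x)t^{f(b)}+O(t^{f(b)+\epsilon})$ built into the definition of $\widetilde{f}_b$, obtain
\[
\widetilde{f}(\widetilde{b})=t^{f(b)}\widetilde{f}_b(\gamma)+O(t^{f(b)+\epsilon'}).
\]
Vanishing then forces $\widetilde{f}_b(\gamma)=0$. Since $\gamma\in(\mathbb{C}^*)^n$, a single monomial cannot vanish at $\gamma$, so $\widetilde{f}_b$ has at least two monomials and Remark~\ref{sobra} gives $b\in\mathcal{T}(f)$.

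\emph{Sufficient direction.} I would first normalize by setting $x_j=\gamma_j t^{b_j}y_j$ and dividing by $t^{f(b)}$, producing a polynomial $\widetilde{g}(y)$ whose coefficients have non-negative order and whose initial form is $\widetilde{g}_0(y)=\widetilde{f}_b(\gamma_1 y_1,\ldots,\gamma_n y_n)$, a polynomial that vanishes at $(1,\ldots,1)$ and has at least two monomials. This reduces the task to finding $\widetilde{y}$ with $\widetilde{g}(\widetilde{y})=0$, $pc(\widetilde{y}_j)=1$, and $o(\widetilde{y}_j)=0$. Next I would choose a primitive $v\in\mathbb{Z}^n$ such that the pairing $i\mapsto v\cdot i$ is injective on $\operatorname{supp}(\widetilde{g}_0)$ — only finitely many rational hyperplanes must be avoided, so such a $v$ exists — and complete it to a basis $M\in GL_n(\mathbb{Z})$ with $v$ as its last row. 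The invertible monomial change $y_j=\prod_k z_k^{M_{kj}}$ transforms $\widetilde{g}$ into a Laurent polynomial $\widetilde{h}(z)$ whose restriction $\widetilde{H}(z_n):=\widetilde{h}(1,\ldots,1,z_n)$ has initial polynomial $\widetilde{g}_0(z_n^{v_1},\ldots,z_n^{v_n})$; by injectivity of $v$ this univariate polynomial has exactly $|\operatorname{supp}(\widetilde{g}_0)|\ge 2$ monomials and vanishes at $z_n=1$. The univariate Newton-Puiseux theorem then supplies $\widetilde{z}_n\in\mathbb{K}$ with $pc(\widetilde{z}_n)=1$, $o(\widetilde{z}_n)=0$, and $\widetilde{H}(\widetilde{z}_n)=0$. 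Undoing the substitutions (with $\widetilde{z}_j=1$ for $j<n$) yields $\widetilde{b}_j=\gamma_j t^{b_j}\widetilde{z}_n^{v_j}$, which satisfies $\widetilde{f}(\widetilde{b})=0$ and has the prescribed principal terms, since $pc(\widetilde{z}_n^{v_j})=1$ and $o(\widetilde{z}_n^{v_j})=0$.

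\emph{Main obstacle.} The crucial step will be the choice of $v$. The naive attempt — setting $\widetilde{y}_j=1$ for $j<n$ and solving for $y_n$ directly — can fail when $\widetilde{g}_0$ vanishes identically on every coordinate axis through $(1,\ldots,1)$, as in $\widetilde{g}_0=(y_1-1)(y_2-1)$ whose two coordinate slices are trivial although many nearby roots of $\widetilde{f}$ exist. The monomial reparametrization $y=z^M$ rotates to a generic $1$-parameter subgroup through $(1,\ldots,1)$ on which $\widetilde{g}_0$ does not vanish identically, converting the problem into a univariate instance that the classical theorem resolves.
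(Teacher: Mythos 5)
Your proof is correct, but it follows a genuinely different route from the paper's. The paper argues by induction on the number of variables: it substitutes $x_1=\gamma_1t^{b_1}$ when $\widetilde{f}_b(\gamma_1,x')\not\equiv 0$, and in the degenerate case where $\widetilde{f}_b$ vanishes identically on every coordinate slice through $\gamma$ (your $(y_1-1)(y_2-1)$ obstacle) it perturbs the substitution to $x_1=(\gamma_1+t^{\epsilon/2k})t^{b_1}$, chosen so small that the new initial form $\widetilde{g}_{b'}$ still vanishes at $(\gamma_2,\ldots,\gamma_n)$, and then recurses down to the classical univariate Newton--Puiseux theorem. You instead collapse everything to the univariate case in one step: after normalizing by $x_j=\gamma_jt^{b_j}y_j$, you restrict to a generic one-parameter monomial subgroup $y_j=z_n^{v_j}$ through $(1,\ldots,1)$, with $v$ chosen so that $i\mapsto v\cdot i$ is injective on the support of the initial form; this guarantees the univariate initial polynomial is $\widetilde{g}_0(z_n^{v_1},\ldots,z_n^{v_n})$ (the terms of positive order cannot disturb principal coefficients), so the classical theorem applies and the root pulls back with the right orders and principal coefficients since $\widetilde{z}_n$ is a unit with principal coefficient $1$. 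Both arguments are constructive and bottom out in the same univariate base case; yours handles the degenerate case uniformly by genericity of $v$ and is shorter (the $GL_n(\mathbb{Z})$ completion is not even needed, since you only ever set the other $z_k=1$), while the paper's variable-by-variable induction mirrors the classical Newton--Puiseux algorithm more closely and produces the coordinates of the root one series at a time, which is what the author wants for the computational applications cited. Your ``only if'' direction is the same direct-substitution argument the paper dismisses as trivial, spelled out correctly.
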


\begin{proof}
The only if part is trivial.
The if implication is proved by induction in $n$, being true for $n=1$ by the classical Newton-Puiseux theorem \cite{wal50}.
First of all, if $\widetilde{f}(\gamma t^{b})=0$, the root is trivially achieved.
second, if one variable $x_j$ does not appear in $\widetilde{f}_b$, it may be substituted by $x_j=\gamma_jt^{b_j}$ without substituting the hypotheses.
Thus, without loss of generality, suppose that the variables appearing in $\widetilde{f}_b$ are exactly $x_1,\ldots,x_n$.
In this situation, there are two possible cases:\\

$\bullet$ If there is a $j$, $1\leq j\leq n$, such that $\widetilde{f}_b(x_1,\ldots, \gamma_j,\ldots,x_n)\neq 0$ then, after reordering the variables if necessary, suppose that $j=1$.
Write $b=(b_1,b')$, $x=(x_1,x')$, $\gamma=(\gamma_1,\gamma')$.
The conditions needed in order to apply induction over $\widetilde{g}(x')=\widetilde{f}(\gamma_1 t^{b_1},x')$ are:
\[b'=(b_2,\ldots,b_n)\in\mathcal{T}(g);\quad \widetilde{g}_{b'}(\gamma_2,\ldots,\gamma_n)=0.\]
It is possible that $g \neq f(b_1,x)$, see example \ref{perdidamonomiosinduccion}.
But, as $\widetilde{g}(x')=\widetilde{f}(\gamma_1t^{b_1},x')$, we trivially verify that
\[\widetilde{g}(x't^{b'})=\widetilde{f}(\gamma_1t^{b_1},x't^{b'})= \widetilde{f}_b(\gamma_1,x')t^{f(b)}+O(t^{f(b)+\epsilon}).\]
So, $\widetilde{g}_{(b')}(\gamma')=\widetilde{f}_b(\gamma_1,\gamma')=0$ and the second condition holds.
By the equivalence given in \ref{sobra}, $\gamma_i\in \mathbb{C}^*$ implies that $(b_2,\ldots,b_n)\in \mathcal{T}(g)$.

$\bullet$ Suppose now that, for every $1\leq i\leq n$, $\widetilde{f}_b(x_1,\ldots, \gamma_i, \ldots,x_n)=0$.
In order to follow induction, recall $\widetilde{f}(xt^{b})=\widetilde{f}_bt^{f(b)}+O(t^{f(b)+\epsilon})$.
Write \[\widetilde{f}_b=(x_1-\gamma_1)^k(x_2-\gamma_2)\cdots(x_n-\gamma_n)q(x_1, \ldots,x_n); q(\gamma_1,x')\neq 0.\]
Substituting here $x_1$ by $\gamma_1$ as before would destroy the desired structure for the induction, so substitute $x_1$ by $\gamma_1+t^{\frac{\epsilon}{2k}} $ instead.
\[\widetilde{g}(x't^{b'})=\widetilde{f}((\gamma_1+t^{\frac{\epsilon}{2k}}) t^{b_1},x't^{b'})=t^{f(b)}\widetilde{f_b}(\gamma_1+t^{\frac{\epsilon}{2k}},x')+ O(t^{f(b)+\epsilon})=\]
\[=t^{f(b)+\frac{\epsilon}{2}}(x_2-\gamma_2)\cdots(x_n-\gamma_n)q(\gamma_1+ t^{\frac{\epsilon}{2k}},x_2,\ldots,x_n)+O(t^{f(b)+\epsilon}).\]
$q$ is a polynomial with coefficients in $\mathbb{C}$, so $q(\gamma_1+t^{\frac{\epsilon}{2k}},x')=q(\gamma_1,x')+O(t^\frac{\epsilon}{2k})$.
The previous expression equals:
\[t^{f(b)+\frac{\epsilon}{2}}(x_2-\gamma_2)\cdots(x_n-\gamma_n) q(\gamma_1,x_2, \ldots,x_n)+O(t^{f(b)+\frac{\epsilon}{2}+\frac{\epsilon}{2k}}).\]
Write $\widetilde{g}(x')=\widetilde{f}((\gamma_1+t^\frac{\epsilon}{2k}) t^{b_1},x')$.
The computations above yields,
$\widetilde{g}_{(b_2,\ldots,b_n)}(\gamma_2,\ldots,\gamma_n)=0$
and hence $(b_2,\ldots,b_n)\in\mathcal{T}(g)$.
Go on with the next induction step.
\end{proof}

Finally, the following example shows how the method works.

\begin{example}\label{perdidamonomiosinduccion}

Consider the polynomial
\[\widetilde{f}=-3t^2+3tx-t^2y+txy-t^3xy^4+(t^4+t^5)y^4+x^5,\]
$f=\min\{2,1+x,2+y,1+x+y,3+x+4y,4+4y,0+5x\}$.
Take $b=(1,0) \in \mathcal{T}(f)$, $\widetilde{f}(tx,y)=(-3+3x-y+xy)t^2+O(t^4)$ and hence $\widetilde{f}_b=-3+3x-y+xy$,
$\widetilde{f}_b(1,-3)=0$. So, by Theorem \ref{initialserie}, there is a root in $(\mathbb{K}^*)^2$ whose principal terms is $(t,-3)$.

As $\widetilde{f}_b(1,y)=\widetilde{f}_b(x,-3)=0$, we are in the second case of the theorem, so we make the substitution $x=t+t^2$ in $\widetilde{f}$.
\[\widetilde{f}(t+t^2,y)=\widetilde{g}(y)=3t^3+t^5+5t^6+10t^7+10t^8+5t^9+t^{10}+t^3y,\]
$g(y)=\min\{3,3+y\}$.
Note that $g(y)\neq f(1,y)=\min\{2,2+y,4+4y\}$ but, as claimed in the theorem, $0\in\mathcal{T}(g)$.
Now we use Newton-Puiseux method to compute one root of $\widetilde{g}(y)$ whose principal term is $-3$, the point is:
\[(x,y)\simeq(t+t^2,-3-t^2-5t^3-10t^4-10t^5-5t^6-t^7)\]
\end{example}

\hfill\begin{minipage}{3.5in}
{Luis Felipe Tabera\\
Dpto. Matem\'aticas, Estad\'\i{}stica y Computaci\'on,\\
F. Ciencias, U. Cantabria, 39071, Santander, Spain\\
\emph{e-mail}: luisfelipe.tabera@unican.es\\
\emph{url}: http://personales.unican.es/taberalf\\}
\end{minipage}
\end{document}